\documentclass{amsart}[12pt]
\usepackage{graphicx,graphics, amsmath, amsthm, amscd, amsfonts}

\usepackage{latexsym}
\makeatletter \oddsidemargin.9375in \evensidemargin \oddsidemargin
\marginparwidth1.9375in \makeatother

%\itshape%SHK 24/2

\newtheorem{theorem}{Theorem}[section]

\newtheorem{proposition}[theorem]{Proposition}

\theoremstyle{definition}

\numberwithin{equation}{section}

\begin{document}
\Large
\title[Maps preserving the spectrum of certain products of
operators]{Maps preserving the spectrum of certain products of operators}

\author[Ali Taghavi and Roja Hosseinzadeh]{Ali Taghavi and Roja
Hosseinzadeh}

\address{{ Department of Mathematics, Faculty of Basic Sciences,
 University of Mazandaran, P. O. Box 47416-1468, Babolsar, Iran.}}

\email{Taghavi@nit.ac.ir,  ro.hosseinzadeh@umz.ac.ir}

\subjclass[2000]{46J10, 47B48}

\keywords{Operator algebra; Non-linear spectrum preserving; Self-adjoint
operator}

\begin{abstract}\large
Let $\mathcal{H}$ be a complex Hilbert space, $\mathcal{B(H)}$ and
$\mathcal{S(H)}$ be the spaces of all bounded operators and
all self-adjoint operators on $\mathcal{H}$, respectively. We give the
concrete forms of the maps on $\mathcal{B(H)}$ and also
$\mathcal{S(H)}$ which preserve the spectrum of certain
products of operators.
\end{abstract} \maketitle

\section{Introduction And Statement of the Results}
\noindent
The study of spectrum-preserving linear maps between Banach
algebras goes back to Frobenius $[3]$ who studied linear maps on
matrix algebras preserving the determinant. The following
conjecture seems to be still open: Any spectrum-preserving linear
map from a unital Banach algebra onto a unital semi-simple Banach
algebra that preserves the unit is a Jordan morphism. There are
many other papers concerning this type of linear preservers; for
example, see $[1,6,8,10,12,13]$.
\par Without assuming linearity, spectrum-preserving maps are almost
arbitrary; see $[2,5,7,9,11]$. In $[9]$, Molnar considered
multiplicatively spectrum-preserving surjective maps on Banach
algebras in the sense that the spectrum of the product of the
image of any two elements is equal to the spectrum of the product
of those two elements, and proved that the maps are almost
isomorphisms in the sense that isomorphisms multiplied by a
signum function for the Banach algebra of all complex-valued
continuous functions on a first countable compact Hausdorff
space. In $[5]$, the authors considered the same problem for
triple Jordan products of operators and proved that such maps
must be Jordan isomorphisms multiplied by a cubic root of unity.
Moreover, they extended the results of $[7]$.\par The main
aim of the present paper is to consider the spectrum preserving of certain products of operators on
the spaces of all self-adjoint
operators and also all bounded operators on a Hilbert space. \par We recall some notations. Let $\mathcal{H}$ be an
infinite dimensional complex Hilbert space, $\mathcal{B(H)}$ and
$\mathcal{S(H)}$ be the spaces of all bounded operators and
self-adjoint operators on $\mathcal{H}$, respectively. $I$ denotes
the identity operator and for any operator $A$ in
$\mathcal{B(H)}$, $\sigma (A)$ and $\vert A\vert $ denote the
spectrum of $A$ and the absolute value of $A$ that is equal to
$(A^*A)^{1/2}$, respectively. If $A\in \mathcal{B(H)}$, then
$f(t)=t^r$ is continuous and nonnegative on $\sigma (\vert A\vert
)$ for any positive rational number $r$. Hence $f$ belongs to
$\mathbf{C}(\sigma (\vert A\vert ))$. By the continuous functional
calculus, $f(A)=\vert A\vert ^r$ belongs to $\mathcal{B(H)}$. If
$P\in \mathcal{B(H)}$ is self-adjoint and $P^2=P$, then $P$ is
called projection. The set of all projections on $\mathcal{H}$ is
denoted by $\mathcal{P(H)}$. If $x,y\in \mathcal{H}$, then
$x\otimes y$ stands for the operator of rank at most one defined
by
$$(x\otimes y)z=<z,y>x  \hspace{.4cm}(z\in\mathcal{H}).$$
The set of all rank-one operators on $\mathcal{H}$ is denoted by
$\mathcal{F}_1(\mathcal{H})$. The set
of all rank-one projections on $\mathcal{H}$ is denoted by
$\mathcal{P}_1(\mathcal{H})$. \par Let $A\in \mathcal{B(H)}$ be a
finite-rank operator. On the set of all finite-rank operators on
$\mathcal{H}$, one can define the trace functional $\mathrm{tr}$ by
$$\mathrm{tr} A=\sum_{i=1}^{n} <x_i,y_i>,$$
where $A=\sum_{i=1}^{n} x_i\otimes y_i$. Then $\mathrm{tr}$ is a
well-defined linear functional.
\par Our main results are the follows.
%---------------------------------------------------------------------------------------%
\begin{theorem} Let $\mathcal{H}$ be a complex Hilbert space, $r$ and $s$ positive rational
numbers such that $r+s > 1$ and $\phi:\mathcal{S(H)}\rightarrow \mathcal{S(H)}$ a
surjective function which satisfies
$$\sigma(\vert A\vert ^rB\vert A\vert ^s) =
\sigma(\vert \phi(A)\vert ^r\phi(B)\vert \phi(A)\vert
^s)\hspace{.6cm}\leqno(*)$$ for all $A$ in
$\mathcal{P}_1(\mathcal{H})\cup \{I\}$ and $B$ in $\mathcal{S(H)}$. Then there exists a
bounded linear or conjugate linear bijection
$T:\mathcal{H}\rightarrow \mathcal{H}$ satisfying $T^*= T^{-1}$
such that
$$\phi(A) = TAT^*$$
for all $A\in \mathcal{S(H)}$.
\end{theorem}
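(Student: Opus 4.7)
The plan is to normalize $\phi$ at the identity, translate condition $(*)$ into a statement about rank-one projections, and then invoke Wigner's theorem on the projective Hilbert space.

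\smallskip
\textbf{Step 1 (Normalization at $I$).} I would first set $A=I$ in $(*)$. Since $|I|=I$, the relation reduces to $\sigma(B)=\sigma(|\phi(I)|^{r}\phi(B)|\phi(I)|^{s})$ for every $B\in\mathcal{S(H)}$. Taking also $B=I$ and using that $|\phi(I)|$ commutes with $\phi(I)$ (both are functions of the self-adjoint $\phi(I)$), the spectral identity $\{1\}=\{|\lambda|^{r+s}\lambda:\lambda\in\sigma(\phi(I))\}$ has the unique real solution $\lambda=1$, so $\sigma(\phi(I))=\{1\}$; being self-adjoint, $\phi(I)=I$. Substituting back, $\sigma(\phi(B))=\sigma(B)$ for every $B\in\mathcal{S(H)}$.

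\smallskip
\textbf{Step 2 (Rank-one projections are preserved).} For $A=e\otimes e\in\mathcal{P}_{1}(\mathcal{H})$, the identities $|A|^{r}=A$ and $ABA=\langle Be,e\rangle A$ turn $(*)$ into
$$\sigma(\langle Be,e\rangle A)=\sigma\bigl(|\phi(A)|^{r}\phi(B)|\phi(A)|^{s}\bigr).$$
Choosing $B=I$ gives $\sigma(|\phi(A)|^{r+s})=\{0,1\}$, so $|\phi(A)|$ is a nonzero projection $R_{A}$ and $|\phi(A)|^{r}=R_{A}$ for every rational $r>0$. As $B$ varies and $\phi(B)$ sweeps $\mathcal{S(H)}$ by surjectivity, $R_{A}\phi(B)R_{A}$ must always have spectrum of cardinality at most two; a direct rank count on the range of $R_{A}$ forces $R_{A}$ to have rank one. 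Since $\phi(A)$ is self-adjoint with $\phi(A)^{2}=R_{A}$, necessarily $\phi(A)=\pm R_{A}$, and testing $B=A$ in $(*)$ eliminates the minus sign. Writing $\phi(e\otimes e)=f_{e}\otimes f_{e}$ and feeding this back into $(*)$ produces the \emph{scalar identity}
$$\langle Be,e\rangle=\langle\phi(B)f_{e},f_{e}\rangle\qquad(e\in\mathcal{H},\ \|e\|=1,\ B\in\mathcal{S(H)}).$$

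\smallskip
\textbf{Step 3 (Wigner and the form of $\phi$).} Specializing the scalar identity to $B=x\otimes x$ yields the transition-probability identity $|\langle e,x\rangle|^{2}=|\langle f_{e},f_{x}\rangle|^{2}$, so the induced map $e\otimes e\mapsto f_{e}\otimes f_{e}$ on $\mathcal{P}_{1}(\mathcal{H})$ preserves transition probabilities. Injectivity is immediate from the scalar identity; for surjectivity, given $q\otimes q$ I pick any $\phi$-preimage $A_{0}$, note that spectrum preservation forces $A_{0}$ to be a projection, and use the scalar identity with $B=A_{0}$ to show that two orthogonal unit vectors in the range of $A_{0}$ would have transition probability $1$ with the same $q$, a contradiction unless $A_{0}$ has rank one. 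Wigner's theorem then supplies a linear or conjugate linear isometry $T:\mathcal{H}\to\mathcal{H}$ with $f_{e}=c_{e}Te$, $|c_{e}|=1$, so $\phi(e\otimes e)=T(e\otimes e)T^{*}$. Rearranging the scalar identity as $\langle Be,e\rangle=\langle T^{*}\phi(B)Te,e\rangle$ (with the appropriate antilinear twist when $T$ is conjugate linear), self-adjointness of both sides together with polarization gives $\phi(B)=TBT^{*}$ for all $B\in\mathcal{S(H)}$; unitarity (resp.\ antiunitarity) of $T$ provides $T^{*}=T^{-1}$.

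\smallskip
\textbf{Expected obstacle.} The most delicate point is the rank reduction in Step 2, where rank-oneness of $R_{A}$ must be extracted in infinite dimensions from the lone constraint $|\sigma(R_{A}CR_{A})|\le 2$ for every self-adjoint $C$. A secondary difficulty is establishing surjectivity of the induced projective map in Step 3, because $(*)$ is only assumed when the outer operator lies in $\mathcal{P}_{1}(\mathcal{H})\cup\{I\}$, so preimages of rank-one projections cannot be controlled by $(*)$ directly and must be analyzed through the auxiliary scalar identity. Finally, the linear versus conjugate-linear dichotomy coming out of Wigner's theorem must be propagated carefully through the final polarization step.
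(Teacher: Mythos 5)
Your proposal is correct, but it takes a genuinely different route from the paper's. The paper follows the classical preserver scheme: it proves $\phi(I)=I$ through a rather laborious invertibility argument, shows $\phi$ is injective and preserves projections and rank-one projections in both directions (its Proposition 2.1), then establishes additivity via the trace identity $\mathrm{tr}(ABA)=\mathrm{tr}(\phi(A)\phi(B)\phi(A))$ with an appeal to Moln\'ar's argument in $[9]$, proves continuity, and finally invokes the You--Liu--Zhang structure theorem (Theorem 2.4, from $[14]$) for $\mathbb{R}$-linear weakly continuous maps carrying $\mathcal{P}_1(\mathcal{H})$ onto itself. You bypass both of these black boxes: your normalization at $I$ is cleaner (spectral mapping applied to $g(t)=\vert t\vert^{r+s}t$, which is strictly increasing on $\mathbb{R}$, so $\sigma(\phi(I))=\{1\}$ and $\phi(I)=I$ at once --- note this nowhere uses $r+s>1$, so your argument even shows that hypothesis is an artifact of the paper's Step 2), and the scalar identity $\langle Be,e\rangle=\langle\phi(B)f_e,f_e\rangle$ lets you go directly to Wigner's theorem and recover $\phi(B)=TBT^*$ on all of $\mathcal{S(H)}$ by polarization, with no additivity or continuity step at all. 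The rank reduction you flag as the delicate point is in fact exactly the paper's Proposition 2.1 (two orthonormal vectors in the range of $R_A$ against a test operator $au\otimes u+bv\otimes v$ with distinct nonzero $a,b$), so it is not a real obstacle; likewise your surjectivity argument for the induced projective map (spectrum preservation forces any $\phi$-preimage of $q\otimes q$ to be a projection, and the scalar identity plus injectivity force it to have rank one) is sound. In terms of trade-offs: your route is more self-contained modulo the classical Wigner theorem, replaces two cited results by one, and slightly strengthens the statement by dropping $r+s>1$; the paper's additive-then-linear scheme is more mechanical and transfers with fewer changes to Theorems 1.2--1.4, where structure theorems for linear maps again do the final work.
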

%---------------------------------------------------------------------------------------%
\begin{theorem} Let $\mathcal{H}$ be a complex Hilbert space, $r$ a positive rational
number such that $r> 1$ and $\phi:\mathcal{S(H)}\rightarrow \mathcal{S(H)}$ a
surjective function which satisfies
$$\sigma(\vert A\vert ^rB)=
\sigma(\vert \phi(A)\vert ^r\phi(B))$$ for all $A$ in
$\mathcal{P}_1(\mathcal{H})\cup \{I\}$ and $B$ in $\mathcal{S(H)}$. Then there exists a
bounded linear or conjugate linear bijection
$T:\mathcal{H}\rightarrow \mathcal{H}$ satisfying $T^*= T^{-1}$
such that
$$\phi(A) = TAT^*$$
for all $A\in \mathcal{S(H)}$.
\end{theorem}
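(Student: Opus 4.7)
I would follow the standard template: extract the structural consequences of the hypothesis, reduce to a Wigner/Uhlhorn problem on rank-one projections, then pass back to arbitrary self-adjoint operators via the quadratic form. Setting $A=I$ in the hypothesis gives $\sigma(B)=\sigma(\phi(B))$ for every $B\in\mathcal{S(H)}$; taking also $B=I$ yields $\sigma(\phi(I))=\{1\}$, so $\phi(I)=I$. For any $P\in\mathcal{P}_1(\mathcal{H})$ one has $|P|^r=P$, and $B=I$ in the hypothesis gives $\sigma(|\phi(P)|^r)=\sigma(P)=\{0,1\}$, forcing $|\phi(P)|$ to be a projection. Combined with $\sigma(\phi(P))=\{0,1\}$ and self-adjointness, this makes $\phi(P)$ itself a projection.

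I expect the main obstacle to be proving $\phi(P)$ has rank exactly one. Writing $P=x\otimes x$ with $\|x\|=1$, self-adjointness of $B$ yields $PB=x\otimes Bx$, so $\sigma(PB)=\{0,\langle Bx,x\rangle\}$, a set of at most two elements. Surjectivity of $\phi$ then forces $\sigma(\phi(P)B')$ to consist of at most two elements for every $B'\in\mathcal{S(H)}$. If $E:=\phi(P)$ had rank $\geq 2$, choosing a rank-one subprojection $F\leq E$ and setting $B':=2E-F$ would give $EB'=2E-F$ with $\sigma(EB')=\{0,1,2\}$, a contradiction. Hence $\phi$ maps $\mathcal{P}_1(\mathcal{H})$ into itself.

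To invoke Uhlhorn's theorem, I would check that $\phi|_{\mathcal{P}_1(\mathcal{H})}$ is an orthogonality-preserving bijection. Injectivity is immediate because the values $\sigma(\phi(P)\phi(B))=\sigma(PB)=\{0,\langle Bx,x\rangle\}$ recover the quadratic form of $B$, which determines $P=x\otimes x$. For surjectivity onto $\mathcal{P}_1(\mathcal{H})$, take $Q\in\mathcal{P}_1(\mathcal{H})$ and pick $A$ with $\phi(A)=Q$; spectrum preservation forces $A$ to be a projection, and if $\mathrm{rank}(A)\geq 2$ then two distinct rank-one subprojections $P_1\neq P_2\leq A$ would satisfy $\sigma(P_iA)=\{0,1\}=\sigma(\phi(P_i)Q)$, forcing $|\langle u_i,v\rangle|=1$ between the spanning unit vectors and hence $\phi(P_1)=\phi(P_2)=Q$, contradicting injectivity. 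Orthogonality preservation in both directions is then immediate from the identity $\sigma((x\otimes x)(y\otimes y))=\{0,|\langle x,y\rangle|^2\}$ applied on both sides of the main relation.

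Finally, Uhlhorn's theorem produces a bounded linear or conjugate linear bijection $T:\mathcal{H}\rightarrow\mathcal{H}$ with $T^*=T^{-1}$ satisfying $\phi(P)=TPT^*$ for every $P\in\mathcal{P}_1(\mathcal{H})$. Setting $\psi(B):=T^*\phi(B)T$, the hypothesis restricted to rank-one $A$ reduces to $\sigma(P\psi(B))=\sigma(PB)$, that is, $\langle\psi(B)x,x\rangle=\langle Bx,x\rangle$ for every unit vector $x$. Since a self-adjoint operator is determined by its quadratic form, $\psi(B)=B$, and therefore $\phi(B)=TBT^*$ on all of $\mathcal{S(H)}$.
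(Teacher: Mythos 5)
Your opening step is misordered and, as written, unjustified: putting $A=I$ into the hypothesis gives $\sigma(B)=\sigma(\vert \phi(I)\vert ^r\phi(B))$, not $\sigma(B)=\sigma(\phi(B))$ --- you cannot drop $\vert \phi(I)\vert ^r$ before knowing $\phi(I)=I$ --- and likewise $A=B=I$ gives $\sigma(\vert \phi(I)\vert ^r\phi(I))=\{1\}$ rather than $\sigma(\phi(I))=\{1\}$. The repair is short but must be stated: since $\phi(I)\in \mathcal{S(H)}$, the operator $\vert \phi(I)\vert ^r\phi(I)$ equals $g(\phi(I))$ for the continuous function $g(t)=\vert t\vert ^r t$, a strictly increasing bijection of $\mathbb{R}$ with $g(1)=1$; the spectral mapping theorem converts $\sigma(g(\phi(I)))=\{1\}$ into $\sigma(\phi(I))=\{1\}$, whence $\phi(I)=I$ by self-adjointness, and only then does $A=I$ yield $\sigma(B)=\sigma(\phi(B))$. (Notice this patch never uses $r>1$, so your argument actually proves the theorem for every positive $r$; in the paper it is precisely the analogue of Step 2 of the proof of Theorem 1.1, done without functional calculus, where $r+s>1$, respectively $r>1$, is invoked.) Two smaller caveats: Uhlhorn's theorem requires $\dim \mathcal{H}\geq 3$, which is covered by the paper's standing assumption that $\mathcal{H}$ is infinite dimensional, and since your step on rank-one projections in fact preserves the full transition probability $\vert <x,y>\vert ^2$ rather than mere orthogonality, Wigner's theorem already suffices there.

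With that patch your proof is correct, and its second half takes a genuinely different route from the paper's. The paper proves Theorem 1.2 by referring to the proof of Theorem 1.1, whose first stages match yours --- $\phi(I)=I$, global spectrum preservation, and preservation of rank-one projections in both directions (its Proposition 2.1 criterion, which your concrete test $B'=2E-F$ with $\sigma(EB')=\{0,1,2\}$ replaces by an equivalent direct computation) --- but which then establishes additivity of $\phi$ via the trace identity and a Moln\'ar-type argument cited from [9], proves continuity, and invokes the You--Liu--Zhang theorem (the paper's Theorem 2.4, from [14]) on $\mathbb{R}$-linear weakly continuous maps carrying $\mathcal{P}_1(\mathcal{H})$ onto itself. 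You instead apply Wigner/Uhlhorn on $\mathcal{P}_1(\mathcal{H})$ to get $\phi(P)=TPT^*$ there, and then determine $\phi(B)$ for every $B\in \mathcal{S(H)}$ in one stroke from $<T^*\phi(B)Tx,x> = <Bx,x>$ for all unit vectors $x$, since a self-adjoint operator is determined by its quadratic form. This buys a cleaner ending: it bypasses both the additivity step (which the paper only sketches) and the continuity step, trading the black box [14] for the more classical black box of Wigner/Uhlhorn, and it delivers the conclusion under the weaker hypothesis $r>0$.
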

%---------------------------------------------------------------------------------------%
\begin{theorem} Let $\mathcal{H}$ be a complex Hilbert space, $r$ a positive rational
number such that $r> 1$ and $\phi:\mathcal{B(H)}\rightarrow \mathcal{B(H)}$ a
surjective function which satisfies
$$\sigma(\vert A\vert ^rB)=
\sigma(\vert \phi(A)\vert ^r\phi(B))$$ for all $A,B \in \mathcal{B(H)}$. Then there exists a
bounded linear or conjugate linear bijection
$T:\mathcal{H}\rightarrow \mathcal{H}$ satisfying $T^*= T^{-1}$
such that
$$\phi(A) = TAT^*$$
for all $A\in \mathcal{B(H)}$.
\end{theorem}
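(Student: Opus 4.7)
The plan is to reduce Theorem 1.3 to Theorem 1.2: first show that $\phi$ restricts to a bijection on $\mathcal{S(H)}$, then invoke Theorem 1.2 to obtain the (conjugate-)linear bijection $T$ with $\phi(A)=TAT^*$ on self-adjoint $A$, and finally extend this formula to arbitrary $B\in\mathcal{B(H)}$ by probing with rank-one projections. I begin with foundational properties. Setting $A=0$ gives $\sigma(|\phi(0)|^r\phi(B))=\{0\}$ for every $B$; surjectivity lets us take $\phi(B)=I$, making the positive operator $|\phi(0)|^r$ quasinilpotent, hence $\phi(0)=0$. For injectivity, assume $\phi(B_1)=\phi(B_2)$; taking $A=x\otimes x$ with $\|x\|=1$ (so $|A|^r=A$) reduces the hypothesis to $\{0,\langle B_1x,x\rangle\}=\{0,\langle B_2x,x\rangle\}$ for every $x$, whence $\langle B_1x,x\rangle=\langle B_2x,x\rangle$ for all $x$ and $B_1=B_2$ by polarization on the complex Hilbert space. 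Analyzing $\phi(I)$ through $A=I$, $B=I$, and $A=B=I$ should then force $\phi(I)=I$ (consistent with $TT^*=I$ in the conclusion).

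Next I would show $\phi(\mathcal{S(H)})=\mathcal{S(H)}$. With $\phi(I)=I$ the hypothesis yields $\sigma(\phi(B))=\sigma(B)$, but this alone does not force self-adjointness. To upgrade, for self-adjoint $B$ I would exploit the rank-one probes $A=x\otimes x$: the spectrum $\sigma((x\otimes x)B)=\{0,\langle Bx,x\rangle\}$, matched with $\sigma(|\phi(x\otimes x)|^r\phi(B))$ for all unit $x$, should pin down $\phi(B)^*=\phi(B)$. An analogous argument applied to $\phi^{-1}$ (which inherits a symmetric form of the hypothesis) gives surjectivity of the restriction, so $\phi|_{\mathcal{S(H)}}$ is a bijection $\mathcal{S(H)}\to\mathcal{S(H)}$ satisfying the hypothesis of Theorem 1.2. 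Theorem 1.2 then delivers a bounded linear or conjugate-linear bijection $T:\mathcal{H}\to\mathcal{H}$ with $T^*=T^{-1}$ such that $\phi(A)=TAT^*$ for every $A\in\mathcal{S(H)}$.

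For the extension to $\mathcal{B(H)}$, fix $B\in\mathcal{B(H)}$ and a unit vector $x$. Since $x\otimes x\in\mathcal{S(H)}$, the established formula gives $\phi(x\otimes x)=T(x\otimes x)T^*=(Tx)\otimes(Tx)$, a rank-one projection (as $T^*=T^{-1}$ makes $T$ an (anti)isometry), and hence $|\phi(x\otimes x)|^r=(Tx)\otimes(Tx)$. Computing the spectra of the rank-one products on both sides of the hypothesis yields $\langle Bx,x\rangle=\langle\phi(B)Tx,Tx\rangle=\langle T^*\phi(B)Tx,x\rangle$ for every unit $x$; polarization then forces $B=T^*\phi(B)T$, i.e., $\phi(B)=TBT^*$ for every $B\in\mathcal{B(H)}$, as required.

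The main obstacle will be the self-adjointness-preservation step. Spectrum preservation is a strictly weaker invariant than self-adjointness, so one has to squeeze enough information out of the rank-one probes to exclude operators that have real spectrum but nonzero imaginary part. Pinning down $\phi(I)=I$ (rather than merely a unitary to be absorbed later) is a companion delicacy, since one only has access to spectra of products $|\phi(I)|^r\phi(B)$. Once these two reductions are in hand, the invocation of Theorem 1.2 on $\mathcal{S(H)}$ and the rank-one extension back to $\mathcal{B(H)}$ are essentially formal.
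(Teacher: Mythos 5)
Your overall architecture coincides with the paper's: show that $\phi$ restricts to a bijection of $\mathcal{S(H)}$ satisfying the hypothesis of Theorem 1.2, invoke that theorem, and extend to $\mathcal{B(H)}$. Your preliminary steps ($\phi(0)=0$; injectivity via $A=x\otimes x$ and polarization) are correct and match the paper's Step 1. But the step you yourself call ``the main obstacle'' --- that $\phi$ maps $\mathcal{S(H)}$ onto $\mathcal{S(H)}$ --- is left at ``should pin down $\phi(B)^*=\phi(B)$,'' and as sketched it does not go through. Probing with $A=x\otimes x$ only tells you $\sigma\bigl(\vert\phi(x\otimes x)\vert^r\phi(B)\bigr)=\{0,\langle Bx,x\rangle\}$, where $\vert\phi(x\otimes x)\vert^r$ is an unknown positive operator: without knowing it is rank one, the spectrum of the product is not a single quadratic-form value; and even granting rank-one-ness, say $\vert\phi(x\otimes x)\vert^r=z\otimes z$, you learn $\langle\phi(B)z,z\rangle\in\mathbb{R}$ only for those $z$ that arise as images, with no guarantee that such $z$ sweep out enough of $\mathcal{H}$ to force self-adjointness. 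The missing idea is the paper's rank-one machinery: Harte's spectral characterization ($A$ is rank one iff $A\neq 0$ and $\mathrm{card}(\sigma(AB)\setminus\{0\})\leq 1$ for all $B$), combined with Proposition 2.2 ($A$ is rank one iff $\vert A\vert^r$ is) and the degenerate case $\vert A\vert^r=0\Leftrightarrow\vert\phi(A)\vert^r=0$, shows that $\phi$ preserves rank-one operators \emph{in both directions}. That reversal is what lets the paper probe in the opposite direction: for each unit vector $y$, surjectivity provides a rank-one $A$ with $\phi(A)=y\otimes y$, whence $\vert A\vert^r=x\otimes x$ and the hypothesis gives $\langle\phi(B)y,y\rangle=\langle Bx,x\rangle\in\mathbb{R}$ for \emph{every} unit $y$ --- exactly what forces $\phi(B)^*=\phi(B)$. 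Your appeal to $\phi^{-1}$ for surjectivity of the restriction presupposes this same unresolved step, and your sketch of $\phi(I)=I$ (``should force'') is likewise unsubstantiated: the symmetrization trick from Theorem 1.1 is unavailable here because $\phi(I)$ is not yet known to be self-adjoint, which is presumably why the paper never attempts $\phi(I)=I$ directly in Theorem 1.3, obtaining it for free inside Theorem 1.2 after restricting.

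On the other hand, your final extension step is genuinely cleaner than the paper's: the paper proves complex linearity of $\phi$ by a trace argument modelled on $[9]$ and then writes $\phi(A_1+iA_2)=TA_1T^*+iTA_2T^*=TAT^*$, whereas you extend directly by probing $B$ with $x\otimes x$ and polarizing, needing no linearity at all. One caveat applies to both arguments: the identity $\langle\phi(B)Tx,Tx\rangle=\langle T^*\phi(B)Tx,x\rangle$ is valid only for linear $T$; if $T$ is conjugate linear, the same computation yields $\phi(B)=TB^*T^*$ rather than $TBT^*$, so the conjugate-linear case must be handled separately or excluded (one can check that $B\mapsto TB^*T^*$ violates the hypothesis for non-normal $A$, e.g.\ $A=e_1\otimes e_2$ with $B=e_2\otimes e_2$), a point that the paper's last display also glosses over.
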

%---------------------------------------------------------------------------------------%
\begin{theorem} Let $r$ and $s$ be positive rational
numbers such that $r+s > 1$ and $\phi:\mathcal{B(H)}\rightarrow \mathcal{B(H)}$ a
surjective function which satisfies
$$\sigma(\vert A\vert ^rB\vert A\vert ^s) =
\sigma(\vert \phi(A)\vert ^r\phi(B)\vert \phi(A)\vert
^s)\hspace{.6cm}\leqno(*)$$ for all
$A,B \in \mathcal{B(H)}$. Then there exists a
bounded linear or conjugate linear bijection
$T:\mathcal{H}\rightarrow \mathcal{H}$ satisfying $T^*= T^{-1}$
such that
$$\phi(A) = TAT^*$$
for all $A\in \mathcal{B(H)}$.
\end{theorem}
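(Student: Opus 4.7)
The plan is to show that $\phi$ preserves rank-one projections, invoke a Wigner-type theorem on this restriction, and extend the resulting formula to all of $\mathcal{B(H)}$.

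Setting $A=I$ in $(*)$ yields $\sigma(B)=\sigma(\phi(B))$ for every $B\in\mathcal{B(H)}$; setting $B=I$ yields $\sigma(|A|^{r+s})=\sigma(|\phi(A)|^{r+s})$, hence $\sigma(|A|)=\sigma(|\phi(A)|)$. In particular $|\phi(I)|=I$ and $\sigma(\phi(I))=\{1\}$, so $\phi(I)$ is an isometry with one-point spectrum on the unit circle; such an operator is unitary, and a unitary with spectrum $\{1\}$ equals $I$, giving $\phi(I)=I$. Now fix $P=x\otimes x\in\mathcal{P}_1(\mathcal{H})$. Then $E:=|\phi(P)|$ is a nonzero projection, since $\sigma(E)=\sigma(|P|)=\{0,1\}$. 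For any $B\in\mathcal{B(H)}$ the operator $|P|^rB|P|^s=PBP=\langle Bx,x\rangle P$ has rank at most one, so $(*)$ forces $\sigma(E\phi(B)E)$ to contain at most two points, one of which is $0$. Surjectivity of $\phi$ allows $\phi(B)$ to be prescribed arbitrarily in $\mathcal{B(H)}$, so the compressed operators $E\phi(B)E$ realise every element of $E\mathcal{B(H)}E\cong\mathcal{B}(E\mathcal{H})$; since the latter contains operators with arbitrarily many distinct eigenvalues unless $\dim(E\mathcal{H})=1$, we conclude $E\in\mathcal{P}_1(\mathcal{H})$. Combining this with $\sigma(\phi(P))=\{0,1\}$ and the polar decomposition $\phi(P)=V|\phi(P)|$ then forces $\phi(P)\in\mathcal{P}_1(\mathcal{H})$.

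Applying $(*)$ with $A=P$ and $B=Q$ both rank-one projections and writing $P=x\otimes x$, $Q=y\otimes y$, $\phi(P)=x'\otimes x'$, $\phi(Q)=y'\otimes y'$, the nonzero eigenvalues of $PQP=|\langle x,y\rangle|^2P$ and $\phi(P)\phi(Q)\phi(P)=|\langle x',y'\rangle|^2\phi(P)$ must coincide, giving $|\langle x,y\rangle|^2=|\langle x',y'\rangle|^2$. Thus $\phi$ restricts to a surjective transition-probability preserver on $\mathcal{P}_1(\mathcal{H})$, and the Wigner--Uhlhorn theorem yields a bounded linear or conjugate linear bijection $T:\mathcal{H}\to\mathcal{H}$ with $T^*=T^{-1}$ such that $\phi(x\otimes x)=Tx\otimes Tx$ for every unit $x\in\mathcal{H}$.

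Finally, for arbitrary $B\in\mathcal{B(H)}$ and $P=x\otimes x\in\mathcal{P}_1(\mathcal{H})$, the spectral identity together with the formula $\phi(P)=Tx\otimes Tx$ reduces to $\langle Bx,x\rangle=\langle\phi(B)Tx,Tx\rangle$ for every unit $x\in\mathcal{H}$. Polarisation in the complex Hilbert space $\mathcal{H}$ (with the routine conjugate-linear adjustment when $T$ is antilinear) yields $\phi(B)=TBT^*$, as required. The principal obstacle is the rank-one reduction in the first step: exploiting surjectivity to deduce that the corner $E\mathcal{B(H)}E$ consists of operators of rank at most one, so that $E=|\phi(P)|$ itself is rank-one, is the essential new input beyond the self-adjoint arguments underlying Theorem 1.1.
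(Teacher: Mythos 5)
Your opening step is circular, and it sits exactly where the real difficulty of this theorem lies. Setting $A=I$ in $(*)$ gives $\sigma(B)=\sigma(\vert \phi(I)\vert ^r\phi(B)\vert \phi(I)\vert ^s)$, not $\sigma(B)=\sigma(\phi(B))$; setting $B=I$ gives $\sigma(\vert A\vert ^{r+s})=\sigma(\vert \phi(A)\vert ^r\phi(I)\vert \phi(A)\vert ^s)$, not $\sigma(\vert \phi(A)\vert ^{r+s})$. Both identities you start from already presuppose $\phi(I)=I$ (or at least $\vert \phi(I)\vert =I$), which is precisely what you then ``deduce'' from them. Everything downstream depends on this: without $\sigma(\vert \phi(P)\vert )=\{0,1\}$ you cannot conclude that $E=\vert \phi(P)\vert$ is a projection; without $\sigma(\phi(P))=\{0,1\}$ the polar-decomposition argument pinning $\phi(P)$ to a rank-one projection fails; and the Wigner step never gets off the ground. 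Note that even in the self-adjoint setting the paper spends the longest step of the entire proof (Step 2 of Theorem 1.1) establishing $\phi(I)=I$, and that argument leans essentially on $\phi(I)$ being self-adjoint: then $\vert \phi(I)\vert ^{\frac{r+s}{2}}\phi(I)\vert \phi(I)\vert ^{\frac{r+s}{2}}$ is self-adjoint with spectrum in $\{0,1\}$, hence a projection, and an injectivity argument upgrades it to $I$. On $\mathcal{B(H)}$ the operator $\phi(I)$ is not known to be self-adjoint, that trick is unavailable, and the paper in fact never proves $\phi(I)=I$ for Theorems 1.3 and 1.4: it takes a route that needs no normalization at $I$ at all --- injectivity, the equivalence $\vert A\vert ^r=0\Leftrightarrow \vert \phi(A)\vert ^r=0$, preservation of arbitrary rank-one \emph{operators} (not projections) via Harte's criterion that $A$ is rank one iff $A\neq 0$ and $\mathrm{card}(\sigma(AB)\setminus\{0\})\leq 1$ for all $B$, linearity via the trace identities as in Moln\'ar, preservation of self-adjointness by compressing against rank-one images, and only then reduction to the $\mathcal{S(H)}$ theorem, extended by writing $A=A_1+iA_2$.

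Two smaller points. Your compression argument itself --- if $\dim E\mathcal{H}\geq 2$ then, by surjectivity, $E\mathcal{B(H)}E$ realises operators with at least three spectral points, contradicting $(*)$ --- is sound and is the right analogue of the paper's Proposition 2.1; indeed it can be salvaged without $\phi(I)=I$ to show that $\vert \phi(P)\vert ^{r+s}$, hence $\phi(P)$, is rank one, though not that $\phi(P)$ is a projection, which is what your Wigner step requires. Finally, the ``routine conjugate-linear adjustment'' in your last paragraph is not routine: if $T$ is antiunitary then $\langle \phi(B)Tx,Tx\rangle =\langle x,T^{-1}\phi(B)Tx\rangle$, and your polarization produces $\phi(B)=TB^*T^*$ rather than $TBT^*$ for non-self-adjoint $B$; this is a genuine discrepancy you would have to confront explicitly in the $\mathcal{B(H)}$ setting (the paper's own final display has the same antilinear wrinkle, but its route at least isolates the self-adjoint case where the two formulas agree). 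You should also justify, not merely assert, that $\phi$ restricted to $\mathcal{P}_1(\mathcal{H})$ is surjective onto $\mathcal{P}_1(\mathcal{H})$ before invoking Wigner--Uhlhorn.
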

%---------------------------------------------------------------------------------------%
\section{Proofs}
%---------------------------------------------------------------------------------------%
We use the following propositions and theorem to prove our results.\par
The following proposition is useful criteria for characterizing
rank-one projections.
%---------------------------------------------------------------------------------------%
\begin{proposition} Let $A\in \mathcal{P(H)}$. Then the following statements are
equivalent.\par
 $(a)$ $A$ is rank-one. \par
 $(b)$ $A\neq 0$ and $\mathrm{card} (\sigma(AT)\setminus \{0\})\leq 1$ for all $T\in \mathcal{S(H)}$.
\end{proposition}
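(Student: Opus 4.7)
The plan is to prove both implications directly, using the structural description of rank-one operators and a simple diagonal construction.

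For the implication $(a)\Rightarrow(b)$, I would write the rank-one projection as $A=x\otimes x$ with $\|x\|=1$. Then for any $T\in\mathcal{S(H)}$ and $z\in\mathcal{H}$,
\[
ATz=A(Tz)=\langle Tz,x\rangle x=\langle z,Tx\rangle x=(x\otimes Tx)z,
\]
so $AT=x\otimes Tx$ has rank at most one. For any rank-one operator $u\otimes v$, a direct check shows that its spectrum is contained in $\{0,\langle u,v\rangle\}$, so $\sigma(AT)\setminus\{0\}$ consists of at most the single point $\langle x,Tx\rangle$, which gives~(b).

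For $(b)\Rightarrow(a)$, I would argue by contraposition. Assume $A$ is a nonzero projection whose range has dimension at least $2$, and choose orthonormal vectors $e_1,e_2$ in the range of $A$. Define
\[
T=e_1\otimes e_1+2\,e_2\otimes e_2,
\]
which is self-adjoint as a real combination of rank-one orthogonal projections. Since $Ae_1=e_1$ and $Ae_2=e_2$, we compute $ATe_1=e_1$ and $ATe_2=2e_2$, so $1$ and $2$ both belong to the point spectrum of $AT$. Therefore $\sigma(AT)\setminus\{0\}$ contains at least two distinct values, contradicting~(b). Hence $A$ must have rank one.

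There is no real obstacle here; the only point requiring a small amount of care is the explicit identification $AT=x\otimes Tx$ (which requires using that $T$ is self-adjoint so that $\langle Tz,x\rangle=\langle z,Tx\rangle$) and the observation that the spectrum of a rank-one operator is $\{0,\langle u,v\rangle\}$. Both are standard and the contradiction in the converse direction is straightforward.
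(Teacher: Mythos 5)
Your proof is correct and follows essentially the same route as the paper: the key direction $(b)\Rightarrow(a)$ is established by the identical construction of a diagonal self-adjoint operator $a\,e_1\otimes e_1+b\,e_2\otimes e_2$ built from two orthonormal vectors in the range of $A$ (the paper takes general distinct nonzero reals $a,b$ after a Gram--Schmidt step, while you take $a=1$, $b=2$ directly), producing two distinct nonzero eigenvalues of $AT$. The only difference is that you also write out the easy direction $(a)\Rightarrow(b)$ via $AT=x\otimes Tx$, which the paper treats as immediate and omits.
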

%---------------------------------------------------------------------------------------%
\begin{proof} In order to complete the proof, it is sufficient to
prove that $(b)$ implies $(a)$. Assume on the contrary that there
exist $y_1,y_2$ in the range of $A$ such that are linearly
independent. Without loss of generality, suppose that $ \Vert y_1 \Vert =1$. If $$z_1= \frac{y_2-<y_2,y_1>y_1}{ \Vert y_2-<y_2,y_1>y_1 \Vert }$$ then $ \Vert z_1 \Vert =1$ and $<y_1,z_1>=0$.
Let $S=ay_1\otimes y_1+bz_1\otimes z_1$ for
some nonzero and different reals $a$ and $b$. It is easy
to check that $S\in \mathcal{S(H)}$, $ASy_1=ay_1$ and
$ASz_1=bz_1$, because $Ay_1=y_1$ and $Ay_2=y_2$. Hence $a,b\in \sigma(AS)$. This is a contradiction
and so the proof is completed.
\end{proof}
%---------------------------------------------------------------------------------------%
\begin{proposition} Let $A\in \mathcal{B(H)}$. Then the following statements hold.\par
 $(a)$ Let $A$ be a positive operator and $r$ a positive rational number. $A$ is rank-one if and only if $A^r$ is. \par
 $(b)$ $A$ is rank-one if and only if $ \vert A \vert $ is.
\end{proposition}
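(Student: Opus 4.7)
The plan is to unify both parts via a single observation: for any $T \in \mathcal{B(H)}$, the restriction $T|_{\ker(T)^\perp}\colon \ker(T)^\perp \to \operatorname{range}(T)$ is a bijection, so $\operatorname{rank}(T) = \dim\bigl(\ker(T)^\perp\bigr)$. Thus to transfer the rank-one property between two operators it suffices to compare their kernels, which is particularly clean when one of them is self-adjoint.

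For part $(a)$, I would apply the continuous functional calculus to the positive operator $A$ and the function $f(t) = t^r$, which is zero on $\sigma(A) \subset [0,\infty)$ exactly at $t = 0$. This yields $\ker(A^r) = \ker(A)$, and since $A$ and $A^r$ are both self-adjoint, the codimension of the kernel coincides with the rank, giving $\operatorname{rank}(A) = \operatorname{rank}(A^r)$. Alternatively, one can argue more concretely: if $A$ is rank-one positive, then $A = \lambda P$ for some $\lambda > 0$ and $P \in \mathcal{P}_1(\mathcal{H})$, and using $P^n = P$ (together with polynomial approximation when $r$ is not an integer) one gets $A^r = \lambda^r P$, still rank-one. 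The reverse implication follows by applying the same argument with exponent $1/r$, since $(A^r)^{1/r} = A$.

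For part $(b)$, in one direction I would compute directly: if $A = x \otimes y$ with nonzero vectors, then $A^*A = \|x\|^2\,(y \otimes y)$, from which the functional calculus gives $|A| = \|x\|\,\|y\|\cdot \bigl(\tfrac{y}{\|y\|} \otimes \tfrac{y}{\|y\|}\bigr)$, still rank-one. In the other direction the key fact is $\ker(A) = \ker(|A|)$, which follows from $\|Az\|^2 = \langle A^*A z, z\rangle = \|\,|A|z\,\|^2$ for every $z \in \mathcal{H}$; combined with the initial observation, $|A|$ rank-one forces $\ker(|A|)^\perp = \ker(A)^\perp$ to be one-dimensional, hence $\operatorname{rank}(A) = 1$. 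The only care needed is to invoke the fractional-power functional calculus only on positive operators and to note that rank-one operators automatically have closed range, so no issues arise in passing between $\operatorname{range}(T)$ and its closure; the statement is really a bookkeeping exercise in kernels once the right principle is isolated.
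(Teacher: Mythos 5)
Your proof is correct, but it takes a genuinely different route from the paper's. The paper handles part $(a)$ by pure linear algebra: it reduces $r=\frac{m}{n}$ to the integer case and runs a downward induction showing that $A^m$ rank-one forces $A^{m-1}$ rank-one, the key mechanism being that positivity gives $\ker A\subseteq(\mathrm{Ran}\,A)^\perp$, so a nonzero kernel vector lying in $\mathrm{Ran}\,A$ yields a contradiction; part $(b)$ is then obtained by showing $A$ is rank-one iff $A^*A$ is (by a similar argument) and applying $(a)$ with $r=\tfrac12$. You instead isolate the principle $\operatorname{rank}(T)=\dim\bigl(\ker(T)^\perp\bigr)$ (valid since $T|_{\ker(T)^\perp}$ is a linear bijection onto the actual range, so no closure issues arise) and reduce everything to kernel identities: $\ker(A^r)=\ker(A)$ via functional calculus with the strictly monotone function $t\mapsto t^r$ vanishing only at $0$, and $\ker(A)=\ker(|A|)$ via $\|Az\|^2=\langle A^*Az,z\rangle=\|\,|A|z\,\|^2$ --- the latter being exactly the paper's Proposition 2.3 extended from self-adjoint to arbitrary operators. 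Your version is shorter, avoids the induction and the reduction to integer exponents entirely, and works verbatim for any real $r>0$ (the paper's restriction to rational $r$ plays no role in your argument), at the cost of leaning on standard spectral-calculus facts: the inclusion $\ker A\subseteq\ker A^r$ needs polynomials vanishing at $0$ (which you note), and the reverse inclusion needs the composition law $(A^r)^{1/r}=A$, both routine but worth stating explicitly if you write this up. The paper's proof is more self-contained and elementary; yours is more conceptual and more general, and it also unifies the two parts under a single kernel-counting lemma rather than treating $(b)$ as a corollary of $(a)$ applied to $(A^*A)^{1/2}$.
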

%---------------------------------------------------------------------------------------%
\begin{proof} $(a)$ If $A$ is rank-one, it is clear that $A^r$ is rank-one. Let
$m$ and $n$ be natural numbers such that $r=\frac{m}{n}$. It is
enough to prove that $A$ is rank-one when $A^m$ is rank-one,
because if $A^\frac{m}{n}$ is rank-one, then it is clear that
$A^m$ is rank-one. Assume that $A^m$ is rank-one but $A^{m-1}$
isn't. Then there are vectors $y_1,y_2\in \mathrm{Ran}A^{m-1}$ such that
$y_1$ and $y_2$ are linearly independent. Since $A^m$ is
rank-one, $Ay_1$ and $Ay_2$ are linearly dependent. So there
exist scalars $a,b\in \mathbb{C}$ with $ab\neq 0$ such that
$aAy_1+bAy_2=0$ which implies $ay_1+by_2\in \ker A$. Since $A$ is
positive, $ay_1+by_2\in (\mathrm{Ran}A)^\perp $. On the other hand,
$ay_1+by_2\in \mathrm{Ran}A$. So we obtain $ay_1+by_2=0$ which implies the
linear independence of $y_1$ and $y_2$ that is contradiction.
Therefore $A^{m-1}$ is rank-one. Inductively, it can be concluded
that $A$ is rank-one.
\par $(b)$ The statement "$A$ is rank-one if and only if $A^*A$
is" is proved similar to the previous part. Now since $\vert A\vert
=(A^*A)^{1/2}$, the assertion can be concluded from $(a)$.
\end{proof}
%---------------------------------------------------------------------------------------%
\begin{proposition} Let $A\in \mathcal{S(H)}$ and $x\in \mathcal{H}$. Then $Ax=0$ if and only if $ \vert A \vert x=0$.
\end{proposition}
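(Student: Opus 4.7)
The plan is to exploit the identity $\||A|x\| = \|Ax\|$, which collapses the equivalence to a statement about vectors of zero norm. Since $A$ is self-adjoint we have $A^* = A$, and by the definition $|A| = (A^*A)^{1/2} = (A^2)^{1/2}$, so that $|A|^2 = A^2 = A^*A$.

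With this in hand, the main computation is a single inner-product manipulation. I would write
\[
\bigl\||A|x\bigr\|^2 = \langle |A|x,\, |A|x\rangle = \langle |A|^2 x, x\rangle = \langle A^*A x, x\rangle = \langle Ax, Ax\rangle = \|Ax\|^2,
\]
using that $|A|$ is self-adjoint (being the positive square root of the positive operator $A^*A$) to move one factor across the inner product. From the resulting equality $\||A|x\| = \|Ax\|$, the equivalence $Ax = 0 \iff |A|x = 0$ is immediate, since a vector is zero if and only if its norm is zero.

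There is no real obstacle here; the proof is essentially a one-line computation, and it does not even need the self-adjointness of $A$ (the same argument works for arbitrary $A \in \mathcal{B(H)}$, via the identity $|A|^2 = A^*A$). The only thing to be mindful of is invoking the continuous functional calculus to justify that $|A|$ is self-adjoint and that $|A|^2$ coincides with $A^*A$, which was already set up in the preliminaries of the paper.
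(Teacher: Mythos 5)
Your proof is correct and is essentially the same computation as the paper's, which likewise establishes $\langle Ax,Ax\rangle=\langle A^2x,x\rangle=\langle \vert A\vert^2x,x\rangle=\langle \vert A\vert x,\vert A\vert x\rangle$ and concludes from the equality of norms. Your closing remark that self-adjointness is not needed (the argument works for any $A\in\mathcal{B(H)}$ via $\vert A\vert^2=A^*A$) is a valid minor strengthening but does not change the approach.
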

%---------------------------------------------------------------------------------------%
\begin{proof} Assertion follows easily from the following equalities
$$<Ax,Ax>=<A^2x,x>=< \vert A \vert ^2x,x>=< \vert A \vert x, \vert A \vert x>.$$
\end{proof}
%---------------------------------------------------------------------------------------%
We use the following theorem to give the general forms of maps satisfying the hypothesis of the
mentioned main results.
%---------------------------------------------------------------------------------------%
\begin{theorem} $([14])$ Let $\mathcal{H}$ be a complex Hilbert space,
and let $L :\mathcal{S(H)}\rightarrow \mathcal{S(H)}$ be an
$ \Bbb{R}$-linear and weakly continuous operator. If
$L(\mathcal{P}_1(\mathcal{H})) = \mathcal{P}_1(\mathcal{H})$,
then there exists a bounded linear or conjugate linear bijection
$T:\mathcal{H}\rightarrow \mathcal{H}$ satisfying $T^*= T^{-1}$
such that $L(A) = TAT^*$ for all $A\in \mathcal{S(H)}$.
\end{theorem}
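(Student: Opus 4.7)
The plan is to reduce this statement to a Wigner--Uhlhorn type preserver theorem on rank-one projections and then exploit $\Bbb{R}$-linearity and weak continuity to extend the resulting formula to all of $\mathcal{S(H)}$. Concretely, I would carry out three steps: (i) show that $L$ restricts to a bijection of $\mathcal{P}_1(\mathcal{H})$ preserving orthogonality in both directions; (ii) apply Uhlhorn's theorem to obtain the bounded linear or conjugate linear $T:\mathcal{H}\to\mathcal{H}$ with $T^*=T^{-1}$ satisfying $L(P)=TPT^*$ on $\mathcal{P}_1(\mathcal{H})$; (iii) propagate $L(A)=TAT^*$ to all of $\mathcal{S(H)}$ by linearity and weak continuity.

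The key algebraic input for step (i) is the identity $u\otimes u+v\otimes v=P_V$, the orthogonal projection onto the two-dimensional subspace $V=\mathrm{span}\{u,v\}$, valid for \emph{every} orthonormal basis $\{u,v\}$ of $V$. Applying $L$ and invoking $\Bbb{R}$-linearity yields
$$L(u\otimes u)+L(v\otimes v)=L(u'\otimes u')+L(v'\otimes v')$$
for all orthonormal bases $\{u,v\},\{u',v'\}$ of $V$; both sides are sums of two rank-one projections. Combined with the surjectivity $L(\mathcal{P}_1(\mathcal{H}))=\mathcal{P}_1(\mathcal{H})$ and weak continuity, this can be pushed to force $L(P_V)$ to be the rank-two projection $P_{V'}$ for some two-dimensional $V'\subset\mathcal{H}$; consequently $L(u\otimes u)\perp L(v\otimes v)$ whenever $u\perp v$, establishing orthogonality preservation in the forward direction. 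For the converse, I would take $P=x\otimes x$, $Q=y\otimes y$ with $c=|\langle x,y\rangle|>0$ and spectrally decompose $P+Q=(1+c)w_+\otimes w_++(1-c)w_-\otimes w_-$; applying $\Bbb{R}$-linearity together with the forward direction (so $L(w_+\otimes w_+)\perp L(w_-\otimes w_-)$) gives $L(P)+L(Q)$ the eigenvalues $1\pm c$, incompatible with its alleged rank-two projection structure (eigenvalues $1,1$), hence $c=0$ and $P\perp Q$.

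With a bijection of $\mathcal{P}_1(\mathcal{H})$ preserving orthogonality in both directions in hand, Uhlhorn's theorem (when $\dim\mathcal{H}\geq 3$; when $\dim\mathcal{H}=2$ one argues directly from the $O(3)$-action on $\mathrm{Bloch}(V)\cong S^2$ together with the double cover $SU(2)\to SO(3)$ and an antiunitary for the reflection component) produces the desired $T$. Then $\Bbb{R}$-linearity gives $L(A)=TAT^*$ for every $\Bbb{R}$-linear combination of rank-one projections, i.e. on all finite-rank self-adjoint operators; since both $L$ and $A\mapsto TAT^*$ are weakly continuous and finite-rank self-adjoint operators are weakly dense in $\mathcal{S(H)}$, the identity extends to all of $\mathcal{S(H)}$.

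The main obstacle is forcing $L(P_V)$ to be a genuine rank-two projection, as opposed to a rank-two operator with unequal eigenvalues $1\pm c$ (the \emph{eccentric} case) or a rank-one multiple of a projection (the \emph{collapse} case). The collapse case is ruled out by weak continuity---a global collapse along an orthonormal basis would force $L(I)$ to equal the weak limit of $nR$ for a fixed rank-one $R$, which is absurd---together with a cross-comparison among intersecting two-dimensional subspaces to preclude partial collapse. The eccentric case is ruled out by exploiting the many distinct ONB-decompositions of $L(P_V)$ on one hand, and the surjectivity hypothesis $L(\mathcal{P}_1(\mathcal{H}))=\mathcal{P}_1(\mathcal{H})$ on the other, to obtain incompatible constraints on $L|_{\mathrm{Bloch}(V)}$.
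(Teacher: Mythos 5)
A preliminary remark: the paper contains no proof of this statement to compare against --- Theorem 2.4 is quoted verbatim from reference [14] (You--Liu--Zhang), so your proposal can only be judged on its own terms. Your architecture (two-sided orthogonality preservation on $\mathcal{P}_1(\mathcal{H})$, then Uhlhorn, then extension by $\mathbb{R}$-linearity and weak continuity) is a legitimate route, and two of your three steps are essentially sound: the backward orthogonality argument via $P+Q=(1+c)\,w_+\otimes w_+ +(1-c)\,w_-\otimes w_-$ is correct granting the forward direction, and the final extension works because every finite-rank self-adjoint operator is a real combination of rank-one projections and these are weakly dense. (You should, however, record the injectivity of $L$ on $\mathcal{P}_1(\mathcal{H})$, which Uhlhorn's bijectivity hypothesis requires: if $L(P)=L(Q)$ with $P\neq Q$, choose a rank-one projection $R$ orthogonal to $P$ but not to $Q$ and use two-sided orthogonality preservation. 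Also, the paper's standing convention makes $\mathcal{H}$ infinite-dimensional, so your $\dim\mathcal{H}=2$ detour is moot.)

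The genuine gap is precisely the step you flag as the main obstacle, and the mechanism you name does not close it. Decomposition data along orthonormal bases, read one circle at a time, is provably insufficient: fix $c\in(0,1)$ and set $m(\theta)=c\,e_3+\sqrt{1-c^2}\,(\cos 2\theta,\sin 2\theta,0)$; these are unit Bloch vectors, the pair $m(\theta),m(\theta+\pi/2)$ corresponds to two rank-one projections summing to the \emph{fixed} eccentric operator with eigenvalues $1\pm c$, and the assignment is affine along the source circle --- so every constraint you list on a single great circle is satisfiable with $c>0$; ``many ONB-decompositions yielding incompatible constraints'' is a hope, not an argument. What actually closes the gap is the full-sphere affine analysis: since $L$ is $\mathbb{R}$-linear on the four-dimensional real space of self-adjoint operators supported on $V$, and since $L(P)\leq L(P_V)$ for \emph{every} $P$ in $\mathrm{Bloch}(V)$ (its antipode supplies the positive complement), either $L(P_V)$ has rank one --- forcing $L(P)=q\otimes q$ constant, the collapse case --- or all images are supported on the two-dimensional $W=\mathrm{ran}\,L(P_V)$ and $L$ induces an affine map $n\mapsto a+Mn$ of all of $S^2$ into the unit sphere; from $\vert a+Mn\vert^2=1$ for all unit $n$, the substitution $n\mapsto -n$ yields $M^{\mathrm{T}}a=0$ and $M^{\mathrm{T}}M=(1-\vert a\vert^2)I$, so either $M=0$ (collapse again) or $a=0$ with $M\in O(3)$, in which case antipodes map to antipodes and $L(P_V)=P_W$ exactly: eccentricity is impossible and forward orthogonality follows, with the surjectivity hypothesis playing no role at this stage. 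The collapse branch then needs your sketch made precise, and it can be: intersecting planes share a projection and hence collapse to the same $R$; moreover $L$ is norm-bounded (closed graph, via weak--weak continuity), so $V\mapsto (a_V,M_V)$ is continuous and the dichotomy $\vert a_V\vert\in\{0,1\}$ is constant on the connected Grassmannian of $2$-planes, excluding partial collapse; global collapse dies on your $L(\sum_{i\leq n}e_i\otimes e_i)=nR$ weak-continuity argument. With these insertions your plan becomes a complete proof; as written, its central step is unproven.
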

%---------------------------------------------------------------------------------------%
\par \vspace{.4cm}
\textbf{Proof of Theorem 1.1.}  We prove that $\phi$ is a
continuous additive bijection of $\mathcal{S(H)}$ that
preserves rank-one projections in both directions. In order to
prove this assertion we divide the proof into several steps. \par
\vspace{.4cm} Step 1. Let $B\in \mathcal{S(H)}$. $B=0$ if and
only if $\phi(B)=0$. \par Since $\phi $ is surjective, there
exists $B \in \mathcal{S(H)}$ such that $\phi(B)=0$. So by $(*)$
we have
$$\sigma(\vert A\vert ^{r}B\vert A\vert ^{s})=\{0\}$$
for all $A\in \mathcal{P}_1(\mathcal{H})$. This implies
$$\{0,<Bx,x>\}=\{0\}$$
for all $x$ from unit ball, which yields $B=0$.
%---------------------------------------------------------------------------------------%
\par \vspace{.4cm} Step 2. $\phi (I)=I$.
\par  First we assert that $\vert \phi (I)\vert ^{\frac{r+s}{2}} \phi(I) \vert \phi (I)\vert ^{\frac{r+s}{2}}=I$.
The condition $(*)$ yields
\begin{eqnarray*}
\sigma(\vert \phi (I)\vert ^{\frac{r+s}{2}} \phi(I) \vert \phi (I)\vert ^{\frac{r+s}{2}}) \cup \{0\}&=& \sigma(\vert \phi (I)\vert ^{r+s} \vert \phi (I) ) \cup \{0\}\\
&=&\sigma(\vert \phi (I)\vert ^r \phi(I) \vert \phi (I)\vert ^s ) \cup \{0\}\\
&=& \sigma (I) \cup \{0\}\\
&=& \{0,1\}.
\end{eqnarray*}
Hence $ \vert \phi (I)\vert ^{\frac{r+s}{2}} \phi(I) \vert \phi (I)\vert ^{\frac{r+s}{2}}$ is a projection. So in order to complete the proof of assertion, it is enough to show that $ \vert \phi (I)\vert ^{\frac{r+s}{2}} \phi(I) \vert \phi (I)\vert ^{\frac{r+s}{2}}$ is injective. First we show that $ \vert \phi (I)\vert ^{r+s}$ is injective. Because if $x \in \mathcal{H}$ such that $ \vert \phi (I)\vert ^{r+s}x=0$, then we have
$$< \vert \phi (I) \vert ^{r+s}x,x>=0$$ and hence
$$0= \sigma(\vert \phi (I)\vert ^{r+s} x \otimes x)= \sigma( \vert \phi (I)\vert ^r x \otimes x \vert \phi (I)\vert ^s ).$$ From $(*)$ we obtain $ \sigma (A)=0$ where $A$ is an operator such that $\phi (A
)=x \otimes x$. This implies that $A=0$. So by Step 1, $x=0$ and therefore $ \vert \phi (I)\vert ^{r+s}$ is injective. \par Now let $x \in \mathcal{H}$ such that $ \vert \phi (I)\vert ^{ \frac{r+s}{2}} \phi(I) \vert \phi (I)\vert ^{\frac{r+s}{2}}x=0$. The injectivity of $\vert \phi (I)\vert ^{r+s}$ yields the injectivity of $ \vert \phi (I)\vert ^{\frac{r+s}{2}}$. So $\phi(I) \vert \phi (I)\vert ^{\frac{r+s}{2}}x=0$. By Proposition 2.3, We have
$\vert \phi (I)\vert ^{\frac{r+s}{2}+1}x=0$. The condition $r+s > 1$ together with the injectivity of $ \vert \phi (I)\vert ^{r+s}$ and also $ \vert \phi (I)\vert ^{\frac{r+s}{2}}$ yields $x=0$ and this completes the proof of assertion. \par From assertion we can conclude that $\vert \phi (I)\vert ^{\frac{r+s}{2}}$ is invertible which multiplying by $\vert \phi (I)\vert ^{\frac{r+s}{2}}$ from right and then by $\vert \phi (I)\vert ^{- \frac{r+s}{2}}$ from left follows
 $$\phi(I) \vert \phi (I)\vert ^{r+s}=I$$ and hence $$\phi(I)=\phi(I) \vert \phi (I)\vert ^{r+s}\phi(I).$$
  So $\phi(I) \geq 0$, because we have
  $$\phi(I)=[\phi(I) \vert \phi (I)\vert ^{- \frac{r+s}{2}}][ \vert \phi (I)\vert ^{- \frac{r+s}{2}} \phi(I)]^*.$$
   $\phi(I) \geq 0$ and condition $(*)$ imply
$$1=\sigma(I)= \sigma(\vert \phi (I)\vert ^r \phi(I) \vert \phi (I)\vert ^s )
= \sigma(\phi (I)^{r+s+1}).$$
Therefore $\phi (I)=I$ and this completes the proof.
%---------------------------------------------------------------------------------------%
\par \vspace{.4cm} Step 3. $\phi$ is injective.
\par Let $\phi (B)=\phi
(B')$. By $(*)$ we obtain
$$\sigma(\vert A\vert ^{r}B\vert A\vert ^{s})=
\sigma(\vert A\vert ^{r}B'\vert A\vert ^{s})$$ for all $A\in
\mathcal{P}_1(\mathcal{H})$. This implies
$$\{0,<Bx,x>\}=\{0,<B'x,x>\}$$
and so
$$<(B-B')x,x>=0$$
for all $x$ from unit ball, which yields $B=B'$ and thus $\phi $
is injective.
%---------------------------------------------------------------------------------------%
\par \vspace{.4cm} Step 4. $\phi$ preserves the projections  and also the rank-one projections in both
directions.
\par  $(*)$ and Step 2 yield that $\sigma(B)=\sigma (\phi(B))$ for all $B\in
\mathcal{S(H)}$. So it is clear that $B$ is
a projection if and only if $\phi(B)$ is. Now let $P\in
\mathcal{P}_1(\mathcal{H})$. Since $\phi(P)$ is idempotent, by
$(*)$ we obtain
$$\sigma(PB) \setminus \{0\} = \sigma(\phi(P)\phi(B)) \setminus \{0\} \leqno(1)$$
for all $B \in \mathcal{S(H)}$. On the other hand, by Proposition 2.1 we have
$P\neq 0$ and $\mathrm{card} (\sigma(PB)\setminus \{0\})\leq 1$ for all $B\in \mathcal{S(H)}$.
This together with $(1)$, surjectivity of $\phi$ and Step 1 follows that $ \phi(P) \neq 0$ and $ \mathrm{card} (\sigma(\phi(P)B')\setminus \{0\})\leq 1$ for all $B'\in \mathcal{S(H)}$.
Again by Proposition 2.1 we infer that $\phi(P)$ is rank-one. Hence $\phi$ preserves the rank-one projections.
Since $\phi$ is injective and $\phi ^{-1}$ has the same properties of $\phi$, the rank-one projections are preserved by $\phi$ in both directions and this completes the proof.
%---------------------------------------------------------------------------------------%
\par \vspace{.4cm} Step 5. $\phi$ is additive.
\par Step 4 and $(*)$ imply that
$$ \mathrm{tr}(ABA)= \mathrm{tr}(\phi(A)\phi(B)\phi(A))$$
for all $A\in \mathcal{P}_1(\mathcal{H})$ and $B \in
\mathcal{S(H)}$. So the assertion can be proved in a very similar
way as the discussion in $[9]$.
%---------------------------------------------------------------------------------------%
\par \vspace{.4cm} Step 6. $\phi $ is continuous. \par We know that
$\sigma(A)=\sigma (\phi(A))$ for all $A\in \mathcal{S(H)}$. By
this fact and Step 5, we can conclude that $\phi $ is continuous.
%---------------------------------------------------------------------------------------%
\par \vspace{.4cm} So by above steps, $\phi$ is a continuous additive bijection
of $\mathcal{S(H)}$ preserving the rank-one projections in both directions. The forms
of such transformations is given in Theorem 2.4. So there exists a
bounded linear or conjugate linear bijection
$T:\mathcal{H}\rightarrow \mathcal{H}$ satisfying $T^*= T^{-1}$
such that
$$\phi(A) = TAT^*$$
for any $A\in \mathcal{S(H)}$. Therefore the proof of Theorem 1.1 is
complete. $\Box$
%---------------------------------------------------------------------------------------%
\par \vspace{.4cm}
\textbf{Proof of Theorem 1.2.} It is similar to the proof
of Theorem 1.1. $\Box$
%---------------------------------------------------------------------------------------%
\par \vspace{.4cm}
\textbf{Proof of Theorem 1.3.} We prove that $\phi$ is a
bijective linear map that preserves self-adjoint operators in
both directions. In order to prove this assertion we divide the
proof into several steps.\par \vspace{.4cm} Step 1. $\phi$ is
injective.
\par Let $\phi(B)=\phi(B')$. By $(**)$ we obtain
$$\sigma(\vert A\vert ^rB )=\sigma(\vert A\vert ^rB')$$
for all $A\in \mathcal{B(H)}$ and so for all $A\in
\mathcal{P}_1(\mathcal{H})$. This implies
$$\{0,<Bx,x>\}=\{0,<B'x,x>\}$$ and so
$$<(B-B')x,x>=0$$
for all $x$ from unit ball, which yields $B=B'$ and thus $\phi $
is injective.
%---------------------------------------------------------------------------------------%
\par \vspace{.4cm} Step 2. Let $A\in \mathcal{B(H)}$. $\vert A\vert ^r=0$ if and
only if $\vert \phi(A)\vert ^r=0$.
\par If $\vert A\vert ^r=0$, then by $(**)$ we have
$$\sigma(\vert \phi(A)\vert ^r \phi (B))=\{0\}$$
for all $B\in \mathcal{B(H)}$. This and surjectivity of $\phi$ follow
$$\{0,<\vert \phi(A)\vert ^rx,x>\}=\{0\}$$
for all $x\in \mathcal{H}$, which yields $\vert \phi(A)\vert ^r=0$. The converse is proved similarly.
%---------------------------------------------------------------------------------------%
\par \vspace{.4cm} Step 3. $\phi$ preserves rank-one operators in both
directions.
\par By Definition 2.2 in $[4]$ we have
\par $A$ is rank-one if and only if $A\neq 0$
and $\mathrm{card }(\sigma(AB) \setminus \{0\})\leq 1$ for all $B\in
\mathcal{B(H)}$.
\par Let $A$ be rank-one. By Proposition 2.2, $\vert A\vert ^r$ is rank-one and so $\vert A\vert ^r \neq 0$ and $\mathrm{card }(\sigma(\vert A\vert ^rB) \setminus \{0\})\leq 1$ for all $B\in
\mathcal{B(H)}$. By Step 2, $(**)$ and the surjectivity of $\phi$ we obtain $\vert \phi(A)\vert ^r \neq 0$ and $\mathrm{card }(\sigma(\vert \phi(A)\vert ^rB') \setminus \{0\})\leq 1$ for all $B'\in
\mathcal{B(H)}$. This implies that $\vert \phi(A)\vert ^r $ is rank-one. Again using Proposition 2.2 yields that $ \phi(A)$ is rank-one. The converse is proved similarly.
%---------------------------------------------------------------------------------------%
 \par \vspace{.4cm} Step 4. $\phi$ is linear.
\par Step 3 and $(**)$ yield
$$ \mathrm{tr} (\vert \phi(A)\vert ^r\phi (B))= \mathrm{tr} (\vert A\vert ^rB)$$
for all rank-one operator $A$ and all $B \in
\mathcal{B(H)}$. So the assertion can be proved in a very similar
way as the discussion in $[9]$.
%---------------------------------------------------------------------------------------%
\par \vspace{.4cm} Step 5. $\phi$ preserves self-adjoint operators in both directions.
\par Let $y$ be an arbitrary element from unit ball. By Step 3, there exists a rank-one
operator $A \in \mathcal{B(H)}$ such that $\phi(A)=y \otimes y $. Thus $\vert A\vert ^r=x \otimes x$ for some
$x \in \mathcal{H}$. Now let
$B$ be a self-adjoint operator. So by $(**)$ we obtain
$$\sigma(x \otimes B^*x)=\sigma(y \otimes \phi(B) ^*y)$$
which implies
$$<Bx,x>=<\phi(B)y,y>.$$
Since $<Bx,x>$ is real, $<\phi(B)y,y>$ is real. This together with the
arbitrariness of $y$ implies that $\phi(B)$ is self-adjoint. The converse is proved in a similar way.
%---------------------------------------------------------------------------------------%
\par \vspace{.4cm} So by above steps, $\phi$ is a bijective linear map such that preserves self-adjoint operators in
both directions and satisfies in $(**)$. The forms of such
transformations on $ \mathcal{S(H)}$ is given in Theorem 1.2. So
there exists a bounded linear or conjugate linear bijection
$T:\mathcal{H}\rightarrow \mathcal{H}$ satisfying $T^*= T^{-1}$
such that
$$\phi(A) = TAT^*$$
for all $A\in \mathcal{S(H)}$. Now let $A\in \mathcal{B(H)}$ be
arbitrary. Then there exist $A_1,A_2\in \mathcal{S(H)}$ such that
$A=A_1+iA_2$. So we have
$$\phi (A)=\phi (A_1+iA_2)=\phi (A_1)+i\phi(A_2)=TA_1T^*+iTA_2T^*=TAT^*.$$
The proof is complete. $\Box$
%---------------------------------------------------------------------------------------%
\par \vspace{.4cm}
\textbf{Proof of Theorem 1.4.} It is similar to the proof
of Theorem 1.3. $\Box$
%---------------------------------------------------------------------------------------%
\par \vspace{.4cm}{\bf Acknowledgements:} This research is partially
supported by the Research Center in Algebraic Hyperstructures and
Fuzzy Mathematics, University of Mazandaran, Babolsar, Iran.

%---------------------------------------------------------------------------------------%
\bibliographystyle{amsplain}

\begin{thebibliography}{10}\large

\bibitem{ta} {\sc B. Aupetit},
\textit{Spectrum-preserving linear mapping between Banach
algebras or Jordan–Banach algebras}, J. London Math. Soc. 62
(2000), 917-924.

\bibitem{ta} {\sc L. Baribeau, T. Ransford},
\textit{ Non-linear spectrum-preserving maps}, Bull. London Math.
Soc. 32 (2000), 8-14.

\bibitem{ta} {\sc G. Frobenius}, \textit{Über die Darstellung der endlichen Gruppen durch
lineare Substitutionen}, Sitzurngsber. Deutsch. Akad. Wiss.
Berlin (1897) 994-1015.

\bibitem{ta} {\sc R. Harte},
\textit{ On rank-one elements}, Studia Math. 117, (1995), 73-77.

\bibitem{co} {\sc J. Hou, C. K. Li, N. C. Wong}, \textit {Jordan isomorphisms and maps preserving spectra of
certain operator products }, Studia Math. 184 (2008), 31-47.

\bibitem{co} {\sc I. Kaplansky
}, \textit {Algebraic and Analytic Operator Algebras}, 1970

\bibitem{co} {\sc L. Huang and J. Hou}, \textit{Maps preserving spectral functions of operator products},
Chinese Ann. Math. Ser. A 28 (2007), 769–780.

\bibitem{ta} {\sc A. A. Jafarian, A. R. Sourour},
\textit{Spectrum-preserving linear maps}, J. Funct. Anal. 66
(1987) 255-261.

\bibitem{co} {\sc L. Moln\'{a}r}, \textit {Some characterizations of the automorphisms of $B(H)$ and $C(X)$},
Proc. Amer. Math. Soc. 130 (2002) 111-120.

\bibitem{co} {\sc M. Neal}, \textit {Spectrum preserving linear maps on $JBW^*$-triples},
Arch. Math. 79 (2002) 258-267.

\bibitem{co} {\sc T. Petek, P. \v{S}emrl}, \textit {Characterization of Jordan homomorphisms on $M_n$ using preserving properties}, Linear Algebra Appl. 269 (1998) 33-46.

\bibitem{ta} {\sc A.R. Sourour},
\textit{Invertibility preserving linear maps on $L(X)$}, Trans.
Amer. Math. Soc. 348 (1996) 13-30.

\bibitem{ta} {\sc A. Taghavi, R. Hosseinzadeh},
\textit{Linear maps preserving idempotent operators }, Bull. Korean Math. Soc. 47 (2010) 787-792.


\bibitem{ta} {\sc H. You, Sh. Liu, G. Zhang}, {Rank one preserving $R$-linear maps on spaces of
self-adjoint operators on complex Hilbert spaces}, Linear Algebra
Appl. 416 (2006) 568-579

\end{thebibliography}

\end{document}